\newtheorem{defi}{\bf D\scriptsize EFINITION \normalsize}
\newtheorem{theorem}{\bf T\scriptsize HEOREM \normalsize}
\newtheorem{lm}{\bf L\scriptsize EMMA \normalsize}
\newtheorem{dk}{\bf C\scriptsize OROLLARY \normalsize}
\newtheorem{rem}{\bf R\scriptsize EMARK \normalsize}
\newtheorem{exa}{\bf E\scriptsize XAMPLE \normalsize}
\newtheorem{pro}{\bf P\scriptsize ROBLEM \normalsize}
\newtheorem{prop}{\bf P\scriptsize ROPOSITION \normalsize}
\newtheorem{no}{\bf N\scriptsize OTE \normalsize}
\def\kopr{\hfill\raisebox{3pt}{\framebox{$\star$}}}
\newenvironment{example}{\begin{exa}\rm}{$\kopr$\end{exa}}
\newenvironment{definition}{\begin{defi}\rm}{\end{defi}}
\newenvironment{lemma}{\begin{lm}\it}{\end{lm}}
\begin{document}

\title{Local distributional chaos}
\author{Francisco Balibrea}
\author{Lenka Ruck\'a }

{\hskip 115mm  November 25, 2021 
\vskip 5mm}

\address{Department of Mathematics, University of Murcia, 30100  Murcia, Spain}
\address{Mathematical Institute, Silesian University in Opava, 74601 Opava, Czechia}
\address{~}

\email{balibrea@um.es}
\email{lenka.rucka@math.slu.cz}

\maketitle

\pagestyle{myheadings}
\markboth{{\sc F. Balibrea, L. Ruck\'a}}
{{\sc Local distributional chaos}}

\begin{abstract} 

Given the dynamical system $(I, f)$ where $I = [0,1]$ and $f \in C(I,I)$, in \cite{14} was introduced the notion of \textit{distributional chaos} which tries to understand the way in which some different types of chaos are spread out on $I$.
\medskip
 
In general discrete dynamical systems $(X, f)$ where $X$ is a topological space and $f \in C(X,X)$, three notions of distributional chaos were defined and denoted by $DC1, DC2$ and $DC3$, were introduced in \cite{6} to distinguish three different ways of distributing. In $(I, f)$ such three notions coincide and they will be denoted by DC-chaos. Generally speaking we have $DC1 \subseteq DC2 \subseteq DC3$--chaos.

\medskip

We wonder if it is possible that chaos can concentrate in some points and develop a local idea of the distributional chaos. Answering to this question, in \cite{1} is introduced the new notion of $DCi$--points for $i = 1,2,3$. Such special points are those in which DC--chaos of different types concentrate.

\medskip
Also in \cite{1} it is proved that if $f$ is continuous interval map with positive topological entropy, then there is at least one DC1--point in the system.

\medskip

In this paper it is proved that in the symbolic space $(\Sigma, \sigma) $ where $\sigma$ is the shift map, every point of $\Sigma$ is a DC1--point. This result is necessary to prove one of the main results of the paper. If $h(f) > 0 $ for an interval system, then it has an uncountable set of DC1--points and moreover the set can be chosen perfect.

\medskip

In greater dimensions than one, we deal with \textit{triangular systems} on $I^{2}$. In this case the relationship between topological entropy and different cases of distributional chaos is not clearly understood and several different results are possible. In the paper we use an example of $F$ given by Kolyada in \cite{10} to prove that the corresponding two dimensional system $(I^ {2}, F)$ has positive topological entropy but without containing DC2--points, proving that there is no concentration of DC2--chaos

\end{abstract}

\section{Introduction}
\medskip

In 2019, A. Loranty and R. Pawlak in \cite{1} dealt with the problem if the distributional chaos \cite{14}, denoted by $DCi$ where $i = 1,2,3$, can concentrate around some points or not, following a \textit{local distributional chaos} theory. In positive cases, such points in some sense, attract the generate chaos. One of the main results on \cite{1} states that if $f$ is a continuous interval map with positive topological entropy h(f) $> 0$, then there is at least one $DC1-point$.   \\

In this paper in Section \ref{result 1}, we give a stronger global version of such result and prove that  if $h(f) > 0$, then there is an uncountable number of $DC1-points$ and the set of them is a Cantor set.   \\

In section \ref{result 2} we concern with the notion of $DCi-points$ in more general spaces, in particular, in the class of triangular maps of the square as a system of higher dimension than interval, but holding many properties similar to those valid in the interval case. For example almost all Sharkovsky's classification is known not only for interval maps, but also for triangular maps, see \cite{8}. \\

In case of triangular maps we need to distinguish between different versions of distributional chaos, DC1, DC2 and DC3 (see e.g. \cite{6}). Main result here is that for triangular maps the strict version of Loranty-Pawlak result is not true for none of versions of DCi-points. For example the construction in \cite{7} and \cite{10} gives a triangular map with positive topological entropy, with no DC1--scrambled pair, but with an uncountable DC2-scrambled set. We show that despite this, there is no DC1, DC2, or DC3 point in the construction.

\section{Preliminaries}
\smallskip

In this section we recall notions we are using. Throughout the paper we use standard notation. Let $(X,d)$ be compact metric space with metric $d$ and $f: X \to X$ a continuous map. Then $\text{diam} (A) = \max\{d(x,y); x,y \in A\}$ for $A \subseteq X$. We use $f^n$ to denote $n$-th iteration of $f$, which is $f^n = f ^{n-1} \circ f$, where $n \in \mathbb{N}$ (and $f^0$ is the identity map). \\

The notion of topological entropy we are using is that introduced by Adler, Konheim and McAndrew (see \cite{11}) defined for continuous maps on topological space $(X, \tau)$, where $\tau$ is its suitable topology.s

We say that a system $(X,f)$ possesses a horseshoe (or $n$-horseshoe), if there are nonempty and nondegenerate sets $J_1, ..., J_n$ with pairwise disjoint interiors such that $(J_1 \cup ... \cup J_n) \subset f(J_i)$ for all $i=1, ..., n$. If sets $J_1, ... ,J_n$ are disjoint, we call it a strict horseshoe. \\

Further we proceed to repeating the definition of distributional chaos, see e.g. \cite{6}. Given a metric $d$, for any positive integer $n$, real number $t$ and pair of points $x,y$, we denote by $\xi(x,y,t,n)$ the number of members of the set $\{d(x,y), d(f(x), f(y)), d(f^2(x), f^2(y)), ...,  d(f^{n-1}(x), f^{n-1}(y))\}$ which are less than $t$. Clearly $\xi(x,y,t,n)=0$ for $t \leq 0$, while for $t>0$, $\xi(x,y,t,n) \in \{0, 1, ..., n\}$. Then we introduce the function $\Psi_{xy}^n(f,t)$ associate to $f$ as $\Psi_{xy}^n(f,t) = \frac{1}{n} ~\xi(x,y,t,n)$ for map $f$. Function $\Psi_{xy}^n(f,t)$ is non-decreasing and its values are from interval [0,1]. Next, let 
\begin{equation}
\Psi_{xy}(f,t) = \liminf_{n \to \infty} ~\Psi_{xy}^n(f,t) \hspace{0.4cm} \text{and} \hspace{0.4cm} \Psi^*_{xy}(f,t) = \limsup_{n \to \infty} ~\Psi_{xy}^n(f,t)
\label{DC1}
\end{equation}
be lower and upper distributional function of $f$ with respect to points $x,y$. It is easy to see that $\Psi_{xy}(f,t) \leq \Psi^*_{xy}(f,t)$ for all $t \in \mathbb{R}$. \\

We say that pair of points $x,y$ form a 
\begin{itemize}
\item DC1--scrambled pair of $f$ (or just DC1 pair), if $\Psi^*_{xy}(f,t) \equiv 1$ and $\Psi_{xy}(f,t)=0$ for some $t>0$,
\item DC2--scrambled pair of $f$ (or just DC2 pair), if $\Psi^*_{xy}(f,t) \equiv 1$ and $\Psi_{xy}(f,t)<\Psi^*_{xy}(f,t)$ ~~ $\forall t>0$,
\item DC3--scrambled pair of $f$ (or just DC3 pair), if $\Psi_{xy}(f,t)<\Psi^*_{xy}(f,t)$ for all $t>0$.
\end{itemize}

Map $f$ is called DCi-scrambled (or simply DCi), if there exists an uncountable subset $S \subseteq X$, where all pairs of points are DCi-scrambled pairs. \\

Clearly, DC3 chaos is weaker than DC2 and DC2 chaos is weaker than DC1. It is known, for example, that positive topological entropy in general dynamical system implies DC2, but not DC1. In the class of continuous maps of the interval, DC1, DC2 and DC3 are proven to be equivalent. There are also other versions of distributional chaos appearing in the theory of linear operators (see \cite{17}). \\

Next definition is an extension of the one taken from \cite{1} and introduces the notion of $DCi-points$.

\begin{definition}
(\cite{1}) Let $(X,f)$ be a compact metric space. Point $x_0$ is called a DCi--point of $f$, for $i= 1,2,3$,  if for every $\varepsilon>0$ there are an uncountable set $S_\varepsilon$, integer $n(\varepsilon)$ and a closed set $A_\varepsilon \supset S_\varepsilon$, such that $S_\varepsilon$ forms a DCi--scrambled set for $f$ and $A_\varepsilon \subset f^{n(\varepsilon)}(A_\varepsilon) \subset B(x_0, \varepsilon)$.
\label{DCpoint}
\end{definition}

The set $A_\varepsilon$ is called \textit{an envelope} of the system $(X,f)$ where the neighbourhood $B(x_0, \varepsilon)$ depends on the chosen metric in $X$. In the case of interval map we will use common term DC--point for all three equivalent versions DC1, DC2, DC3.  \\

A triangular map (sometimes also called skew-product maps usually when we are describing situations on spaces which are not of coordinates) on the square $[0, 1]^{2} = I^2$ into itself is a continuous map $F: I^2 \to I^2$ of the form $F(x,y) = (f(x), g(x,y)) = (f(x), g_x(y))$, where $I=[0,1]$ and $g_x: I \to I$ is a family of maps depending continuously on $x$. The map $f$ of the corresponding dynamical system is called the base for $F$ and the maps $g_x$ are called the fibre maps for all $x \in I$. \\

\section{DC-points for the interval maps}
\label{result 1}
\medskip
The goal of this section is to strengthen the result from \cite{1} that every positive entropy interval map posses at least one DC--point. Here we show that in this case there is not only one DC--point, but uncountably many of them and moreover, their set can be chosen perfect (Cantor). We start with recalling that symbolic system is DC1 and presenting the Lemma saying that all points in symbolic space are DCi--points. It suggests that these points are not so special, as their definition might imply.\\

Let $\Sigma = \{0,1\}^{\mathbb{N}}$ denote the space of all infinite sequences of 0's and 1's and $\sigma \in C(\Sigma)$ be the one-sided shift map defined as $\sigma(\alpha_1 \alpha_2\alpha_3 ...) = (\alpha_2 \alpha_3 ... )$ where $(\alpha_1 \alpha_2 ... )\in \Sigma$. It is known that system $(\Sigma, \sigma)$ contains (at least one) uncountable DC1--scrambled set. Its construction can be found for example in \cite{5} (in the proof of Theorem 6.26). Let's denote such set by $S \subset \Sigma$. \\

\begin{lemma}
Every point of symbolic system $(\Sigma, \sigma)$ is a $DC1-point$. 
\label{sigma}
\end{lemma}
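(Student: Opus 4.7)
The idea is to pick, inside any ball $B(x_0,\varepsilon)$, a closed set $A_\varepsilon$ which is $\sigma^n$-invariant (making the inclusions in Definition~\ref{DCpoint} automatic) and contains a copy of the uncountable DC1-scrambled set $S\subset\Sigma$ furnished by \cite{5}.

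Fix $x_0=\alpha_1\alpha_2\ldots\in\Sigma$ and $\varepsilon>0$. Take $N$ with $2^{-N}<\varepsilon$, set $W=\alpha_1\ldots\alpha_N$ and $n(\varepsilon)=N+1$, and let
\[ A_\varepsilon=\{\,Wb_1Wb_2Wb_3\ldots\in\Sigma : b_i\in\{0,1\}\,\}. \]
Then $A_\varepsilon\subset[W]\subset B(x_0,\varepsilon)$, $A_\varepsilon$ is closed, and $\sigma^{N+1}(A_\varepsilon)=A_\varepsilon$ (a generic element $Wb_1Wb_2\ldots$ is both the $\sigma^{N+1}$-image of $W\ast Wb_1Wb_2\ldots\in A_\varepsilon$ and is itself sent by $\sigma^{N+1}$ to $Wb_2Wb_3\ldots\in A_\varepsilon$). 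In particular $A_\varepsilon\subset\sigma^{N+1}(A_\varepsilon)\subset B(x_0,\varepsilon)$, as required. Moreover the map $\phi:\Sigma\to A_\varepsilon$ defined by $\phi(b_1b_2\ldots)=Wb_1Wb_2\ldots$ is a homeomorphism with $\phi\circ\sigma=\sigma^{N+1}\circ\phi$, so $\phi$ is a topological conjugacy of $(\Sigma,\sigma)$ onto the subsystem $(A_\varepsilon,\sigma^{N+1}|_{A_\varepsilon})$. Put $S_\varepsilon:=\phi(S)$; since $\phi$ is injective, $S_\varepsilon$ is uncountable.

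The decisive step is to verify that $S_\varepsilon$ is DC1-scrambled for $\sigma$, not merely for the accelerated shift $\sigma^{N+1}$. For distinct $s,t\in S$, set $u=\phi(s)$, $v=\phi(t)$, and let $m(j):=\min\{m\geq 1:s_{j+m}\neq t_{j+m}\}$, so that $d(\sigma^j s,\sigma^j t)=2^{-m(j)}$ in the standard metric of $\Sigma$. For $k=j(N+1)+i$ with $0\leq i\leq N$ a direct calculation gives
\[ d(\sigma^k u,\sigma^k v)=2^{-(m(j)(N+1)-i)}; \]
in particular, when $M=L(N+1)$, the event $d(\sigma^k u,\sigma^k v)<2^{-M}$ occurs on the entire $(N+1)$-block of indices $k=j(N+1),\ldots,(j+1)(N+1)-1$ if $m(j)>L$ and never on that block if $m(j)\leq L$. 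Hence $\Psi^{n(N+1)}_{uv}(\sigma,2^{-M})=\Psi^n_{st}(\sigma,2^{-L})$ for every $n$, and passing from this arithmetic subsequence of indices to arbitrary $n'\to\infty$ perturbs the value by at most $O(1/n')$. Taking $\limsup$ and $\liminf$, the DC1 conditions for $(s,t)$ transfer to $(u,v)$: $\Psi^*_{st}(\sigma,\cdot)\equiv 1$ gives $\Psi^*_{uv}(\sigma,r)=1$ for every $r>0$, and $\Psi_{st}(\sigma,r_0)=0$ gives $\Psi_{uv}(\sigma,2^{-L(N+1)})=0$ for any $L$ with $2^{-L}\leq r_0$. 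The only genuinely delicate point is this final bookkeeping step transferring $\sigma^{N+1}$-time density statements to $\sigma$-time density statements; the invariance of $A_\varepsilon$, the conjugacy $\phi$, and the transfer of $S$ to $S_\varepsilon$ are all formal.
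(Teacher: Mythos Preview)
Your argument is correct, but it takes a genuinely different route from the paper. The paper simply \emph{prepends} a fixed block: it sets $n=m(\varepsilon)$, $A_\varepsilon=\{x_0|_n\,x_0|_n\,x:x\in\Sigma\}$ and $S_\varepsilon=\{x_0|_n\,x_0|_n\,y:y\in S\}$, so that $\sigma^{n}(A_\varepsilon)=[x_0|_n]=B(x_0,\varepsilon)$ and the required inclusions are immediate. Since for any pair $u=x_0|_n x_0|_n\,y$, $v=x_0|_n x_0|_n\,y'$ one has $\sigma^{2n}u=y$, $\sigma^{2n}v=y'$, the distributional functions of $(u,v)$ and of $(y,y')$ coincide exactly; no density transfer between $\sigma$ and an accelerated shift is needed. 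Your construction instead \emph{interleaves} $W$ between the symbols, which buys you an envelope that is exactly $\sigma^{N+1}$-invariant and carries a genuine conjugacy $(\Sigma,\sigma)\cong(A_\varepsilon,\sigma^{N+1})$, at the cost of having to carry out the block-counting argument converting $\sigma^{N+1}$-time densities to $\sigma$-time densities. That step is done correctly (the key identity $\Psi^{n(N+1)}_{uv}(\sigma,2^{-L(N+1)})=\Psi^{n}_{st}(\sigma,2^{-L})$ holds exactly, and the passage to arbitrary $n'$ is indeed an $O(1/n')$ perturbation), so your proof stands; it is just more work than the paper's prefix trick, while yielding the mild bonus of an envelope with $\sigma^{n(\varepsilon)}(A_\varepsilon)=A_\varepsilon$ rather than a strict inclusion.
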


\begin{proof}
Equip the space $\Sigma$ with the metric $d$, where $d(x,y) = \frac1i$ and $i\in \mathbb{N}$ is the first position where symbols in $x,y$ differ. Given $B(x_0, \varepsilon)$, let $m(\varepsilon)$ denote the lowest integer such that for any point $x \in B(x_0, \varepsilon)$, sequences $x$ and $x_0$ coincide on (at least) first $m(\varepsilon)$ positions. \\

We choose point $x_0 \in \Sigma$ and fix it. If we want to show that $x_0$ is a DC1--point, by definition we need to show that for every $\varepsilon>0$ there are: uncountable DC1--scrambled set $S_{x_0,\varepsilon}$, integer $n(x_0,\varepsilon)$ and a closed set $A_{x_0,\varepsilon} \supset S_{x_0,\varepsilon}$ such that $A_{x_0,\varepsilon} \subset \sigma^{n(x_0,\varepsilon)}(A_{x_0,\varepsilon}) \subset B(x_0, \varepsilon)$. \\

We use notation $x_0|_{n(x_0,\varepsilon)}$ for finite block of first $n(x_0,\varepsilon)$ symbols of sequence $x_0$. For any $\varepsilon>0$ we set $n(x_0,\varepsilon):=m(\varepsilon)$, $S_{x_0,\varepsilon}:=\{x_0|_{n(x_0,\varepsilon)} ~x_0|_{n(x_0,\varepsilon)} ~y, ~\forall y \in S\}$ and $A_{x_0,\varepsilon}:=\{x_0|_{n(x_0,\varepsilon)} ~x_0|_{n(x_0,\varepsilon)} ~x, ~\forall x \in \Sigma\}$. This makes $\sigma^{n(x_0,\varepsilon)} (A_{x_0,\varepsilon}) =\{x_0|_{n(x_0,\varepsilon)} ~x, ~\forall x \in \Sigma\}$.\\

Obviously $S_{x_0,\varepsilon} \subset A_{x_0,\varepsilon} \subset \sigma^{n(x_0,\varepsilon)} (A_{x_0,\varepsilon}) = B(x_0, \varepsilon)$. Moreover $S_{x_0,\varepsilon}$ forms an uncountable DC1-scrambled set of $\Sigma$, because $S$ is an uncountable DC1--scrambled set of $\Sigma$ and $A_{x_0,\varepsilon}$ is closed set, because limit point of sequence of points of type $~x_0|_{n(x_0,\varepsilon)} ~x_0|_{n(x_0\varepsilon)} ~x~$ is again point of type $~x_0|_{n(x_0,\varepsilon)} ~x_0|_{n(x_0\varepsilon)} ~x$.
\end{proof}
\medskip

\begin{theorem}
Let $f:[0,1]\to[0,1]$ be a continuous map such that $h(f) >0$. Then the system ([0,1],f) has an uncountable set of DC1-points. Moreover this set can be chosen perfect.
\label{th1}
\end{theorem}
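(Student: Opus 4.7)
The plan is to reduce the problem, via a classical horseshoe construction, to the symbolic situation handled by Lemma~\ref{sigma}. Since $h(f) > 0$, a theorem of Misiurewicz provides an integer $N \geq 1$ and two disjoint compact subintervals $J_0, J_1 \subset I$ forming a strict horseshoe for $g := f^N$, i.e. $J_0 \cup J_1 \subset g(J_i)$ for $i = 0,1$. The standard itinerary construction then yields a compact $g$-invariant Cantor set $K \subset J_0 \cup J_1$ together with a homeomorphism $\pi : K \to \Sigma$ conjugating $g|_K$ with the shift $\sigma$; since $J_0, J_1$ are disjoint, $K$ is perfect and uncountable.

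The first step is to show that every $x_0 \in K$ is a DC1-point of $g$. Fix $\varepsilon > 0$; by uniform continuity of $\pi^{-1}$ choose $\delta > 0$ with $\pi^{-1}(B_\Sigma(\pi(x_0),\delta)) \subset B_I(x_0,\varepsilon)$. Applying Lemma~\ref{sigma} to the point $\pi(x_0) \in \Sigma$ and the radius $\delta$ yields an integer $m$, a closed set $A \subset \Sigma$, and an uncountable DC1-scrambled $S \subset A$ with $A \subset \sigma^m(A) \subset B_\Sigma(\pi(x_0),\delta)$. Setting $A' := \pi^{-1}(A)$ and $S' := \pi^{-1}(S)$, we get $A'$ closed in $I$, $S'$ uncountable, the conjugacy identity $g^m(A') = \pi^{-1}(\sigma^m(A))$ gives $A' \subset g^m(A') \subset B_I(x_0,\varepsilon)$, and $S'$ inherits the DC1-scrambled property for $g$ because both $\pi$ and $\pi^{-1}$ are uniformly continuous, so the distributional functions $\Psi$ and $\Psi^*$ are preserved up to adjustment of the threshold.

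The remaining step is to pass from $g = f^N$ to $f$. The envelope condition is automatic since $g^m = f^{Nm}$. For the scrambled-set condition I would verify the well-known equivalence that DC1 for $f^N$ implies DC1 for $f$. The forward property $\Psi^*(f,t) = 1$ follows from $\Psi^*(f^N,t) = 1$ because uniform continuity of $f$ propagates a small $f^{Nk}$-distance to the whole block of $N$ consecutive $f$-iterates, so a limsup-density-one subsequence for $f^N$ produces one for $f$. For the liminf, given $t_0 > 0$ with $\Psi(f^N,t_0) = 0$, pick $t_0' > 0$ so small that $d(u,v) < t_0'$ forces $d(f^s(u), f^s(v)) < t_0$ for $s = 1,\dots,N$; then any $f^{Nk+r}$-distance below $t_0'$ forces the $f^{N(k+1)}$-distance below $t_0$, so summing over $r = 0,\dots,N-1$ along the $f^N$-sparse subsequence shows that the $f$-iterates also have vanishing liminf density below $t_0'$.

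Combining, every point of the Cantor set $K$ is a DC1-point of $f$, providing the required uncountable perfect set. The conceptual content lies in the horseshoe-symbolic reduction and Lemma~\ref{sigma}; the main technical obstacle is the last paragraph, namely the uniform-continuity bookkeeping that transports DC1 from $\sigma$ through $g$ to the original map $f$.
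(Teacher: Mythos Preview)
Your argument follows essentially the same route as the paper's: Misiurewicz horseshoe for an iterate $g=f^N$, pull Lemma~\ref{sigma} back through the coding map to get a Cantor set of DC1-points for $g$, then pass from $g$ to $f$. One point needs tightening: a strict horseshoe with disjoint $J_0,J_1$ does not by itself make the itinerary map a homeomorphism, since nothing forces the nested intervals $\bigcap_n J_{\alpha_1\cdots\alpha_n}$ to shrink to points. The paper accordingly works only with the semi-conjugacy of Ruette's Proposition~5.15 (surjective, one-to-one off a countable set), and your envelope and scrambled-set transfers go through unchanged in that setting because the relevant sets are preimages of cylinders; but the homeomorphism claim as written is not justified. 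On the final step the two proofs differ in flavour: you push DC1 directly from $f^N$ to $f$ via uniform continuity bookkeeping, whereas the paper invokes the interval-specific equivalence $\mathrm{DC1}=\mathrm{DC3}$ and runs a contradiction argument at the DC3 level. Your version is more self-contained and avoids that equivalence; the paper's is shorter once the equivalence is granted.
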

\smallskip

\begin{proof}
We start with the Misiurewicz's result that the positive topological entropy of $f$ implies existence of a horseshoe for some iteration of $f$ (e.g. Theorem 4.7 in \cite{5}). Let us denote such iteration by $f^k$ and for simplicity $g:=f^k$. From now on we work with map $g$ and we will get back to $f$ in the end of the proof. \\

Map $g$ possesses a horseshoe, so we get sets $J_0, J_1$ such that $(g(J_0) \cap g(J_1)) \supset (J_0\cup J_1)$. By applying Lemma 1.13(i) from \cite{5} we get subsets $J_{00}, J_{01} \subset J_0$ such that $g(J_{00}) = J_0$ and $g(J_{01}) = J_1$. After doing the same for $J_1$ we get sets $J_{10}, J_{11}$. Also by Lemma 1.13(i) it is true that sets $J_{00}, J_{01}, J_{10}, J_{11}$ are disjoint, their boundaries map on boundaries of $J_0$ or $J_1$ and their interiors on interiors of $J_0$ or $J_1$, respectively. By additional applications of Lemma 1.13(i) and analysis similar to that from Proposition 5.15 from \cite{5}, we get system of sets $J_{\overline{\alpha}} = \bigcap_{n=1}^{\infty} J_{\alpha_1 ... \alpha_n}$, where $\overline{\alpha} = \alpha_1 \alpha_2 .... \in \Sigma$. \\

Also, by Proposition 5.15 from \cite{5}, there exists an invariant Cantor set $X \subset I$ and a continuous map $\varphi: X \to \Sigma $ such that $\varphi$ is a semi-conjugacy between $(X,g|_X)$ and $(\Sigma, \sigma)$ (in particular $(\varphi \circ g)(x) = (\sigma \circ \varphi)(x)$ for all $x \in X$), the system $(X,g|_X)$ is transitive and there exists countable set $E \subset X$ such that $\varphi$ is one-to-one on $X \setminus E$ and two-to-one on $E$.\\

By Lemma \ref{sigma}, every point of $(\Sigma, \sigma)$ is a DC--point. As in its proof, we denote the uncountable DC1-scrambled set in $(\Sigma, \sigma)$ by $S$ and uncountable DC1--scrambled set with respect to point $\overline{\alpha}$ and $\varepsilon>0$ as $S_{\overline{\alpha}, \varepsilon}$. Given $\overline{\alpha} \in \Sigma$ and $\varepsilon>0$ define the transformation 
 $\psi_{\overline{\alpha},\varepsilon}: \Sigma \to \Sigma$ as
 
\begin{equation}
\psi_{\overline{\alpha},\varepsilon}(x) = \overline{\alpha}|_{n(\overline{\alpha},\varepsilon)}~ \overline{\alpha}|_{n(\overline{\alpha},\varepsilon)}~ x~~\text{for all} ~~ x\in \Sigma.
\label{BB}
\end{equation}

The transformation maps all space $\Sigma$ to $A_{\overline{\alpha},\varepsilon}$ and DC1--scrambled set $S$ into $S_{\overline{\alpha},\varepsilon}$. Moreover, map $\psi_{\overline{\alpha},\varepsilon}$ is continuous for every $\overline{\alpha} \in \Sigma$ and $\varepsilon>0$, because for arbitrary points $x\neq y \in \Sigma$, the images $\psi_{\overline{\alpha},\varepsilon}(x), \psi_{\overline{\alpha},\varepsilon}(y)$ have $2n(\overline{\alpha},\varepsilon)$ more common first spaces than $x,y$. \\

As $\varphi$ maps one point from $X$ to one point from $\Sigma$, $\psi_{\overline{\alpha},\varepsilon}$ maps one point in $\Sigma$ to one point in $\psi_{\overline{\alpha}, \varepsilon}(\Sigma)$. Let us suppose the point $x_0= \varphi^{-1}(\overline{\alpha})$. By the construction of $X$, $\psi_{\overline{\alpha},\varepsilon}(\Sigma)$ and map $\varphi$, $x_0$ is a DC1--point of $g$ with uncountable DC1--scrambled set $S_{x_0,\varepsilon} = \varphi^{-1} (\psi_{\overline{\alpha}, \varepsilon}(S))$ and envelope $A_{x_0,\varepsilon} =\varphi^{-1} (\psi_{\overline{\alpha},\varepsilon}(\Sigma))$ in the referred topology on $\Sigma$. Consequently the set $X=\varphi^{-1}(\Sigma)$ is an uncountable set of DC1--points of $g$ and by the above construction it is a Cantor set. \\

As a last step we want to show that $X$ is also uncountable set of DC1--points for $f$. We fix an arbitrary pair $x,y \in X$ and $t \in I$. Since for interval maps all versions of distributional chaos are equivalent, it is enough to use chaos DC3 in the proof. We know $\Psi_{xy}^*(g,t) >\lambda > \Psi_{xy}(g,t)$ for some $\lambda \in (0,1)$ and we want to show the same for map $f$. Let us suppose, by contraction, that there is $\gamma \in I$ such that $\Psi_{xy}^*(f,t) = \Psi_{xy}(f,t) =\gamma$. Then $\lim_{n \to \infty} \Psi_{xy}^n(f,t) = \gamma$ and consequently for every $\delta>0$ there is an $N \in \mathbb{N}$ such that for all $n>N$, $\Psi_{xy}^n(f,t) \in (\gamma-\delta, \gamma + \delta)$. Map $f$ is continuous, thus for every $\varepsilon>0$ there exists $\delta>0$ such that 

\begin{equation}
\frac{\xi(x,y,t,n)}{n} \in (\gamma-\delta, \gamma+\delta) ~~{\rm for}~f ~~~~\Longrightarrow~~~~ \frac{\xi(x,y,t,\lfloor \frac{n}{k} \rfloor)}{\lfloor \frac{n}{k} \rfloor} \in (\gamma-\varepsilon, \gamma+\varepsilon)  ~~{\rm for}~g,
\label{new}
\end{equation}

where $\lfloor \cdot \rfloor$ denotes the integer part of the rational number. But (\ref{new}) implies $\lim_{n \to \infty} \Psi_{xy}^n(g,t)=\gamma =\lambda$, which finishes the contradiction. \\

Finally since $x,y$ and $t$ was arbitrary, we showed that $X$ is uncountable set of DC1--points of map $f$, as desired. 
\end{proof}
\bigskip

\section{DC--points in more general spaces}
\label{result 2}
\medskip
In this section we study existence of DCi-points in higher classes of maps, general compact metric spaces with continuous maps. There is no longer equivalence between various types of distributional chaos. \\

Generalization of the result from section \ref{result 1} for DC1--points is not possible here without supposing some additional properties, because there is generally no relation between positive topological entropy and chaos DC1, see \cite{15} and \cite{16}. On the other hand, Downarowicz proved in \cite{9} that positive topological entropy implies chaos DC2 (hence also DC3). Despite this result, positive topological entropy does not imply existence of a single DC2--point, even in the class of triangular maps of the square. In the paper by Kolyada \cite{10}, there appears a construction of a whole class of continuous triangular maps of type $2^\infty$ with positive topological entropy. Later, it was showed by Smítal and Štefánková in \cite{7} that these maps have no DC1--scrambled pair, but there is uncountable DC2--scrambled set. We use this construction to show that there is no DC2--point in any of the maps. \\

By the definition \ref{DCpoint}, the envelope $A_{\varepsilon}$ of a DCi--point $x_0$ must satisfy the inclusions $A_\varepsilon \subset f^{n(\varepsilon)}(A_\varepsilon) \subset B(x_0, \varepsilon)$. We will show that in the construction by Kolyada, no possible envelope can satisfy the first inclusion, while it is possible to satisfy the second one. It could be discussed whether the first inclusion is necessary in the definition, or how it can be replaced. But in this paper we omit such discussion and we take the definition as presented in \cite{1}.\\

First we recall the Kolyada's construction and then, with help of Lemma \ref{helpful} we prove Theorem \ref{noDCpoint}, the main result of this section. 

\begin{example}
We recall the construction of class of triangular maps $F(x,y) = (f(x), g_x(y))$ from Theorem 10 in \cite{10}, for more details see the cited paper. \\

Let base map $f$ be the logistic function $f(x) = \lambda x (1-x)$, where $\lambda$ is irrational number $\lambda=3.569...$, such that $f$ is of type $2^\infty$. Such map is known to contain a system of intervals $J_k^n$, where $n=1, 2, ...$and $k=0, 1, ..., 2^n-1$ such that:
\begin{equation}
\text{The} \hspace{0.2cm}J_k^n ~~\text{with fixed}~ n~ \text{are mutually disjoint,}
\label{form1}
\end{equation}
\begin{equation}
f(J_k^n)=J_{k+1}^n ~~\text{and}~~ f(J_{2^n-1}^n) = J_0^n,
\label{form2}
\end{equation}
\begin{equation}
\text{every interval} ~~J_k^{n-1}~~ \text{contains exactly two intervals} ~~J_k^n, J_{k+2^{n-1}}^n ~\text{and} 
\label{form3}
\end{equation}
$$\text{its complement}~~ J_k^{n-1}\setminus (J_k^n \cup J_{k+2^{n-1}}^n)~~ \text{contains periodic point of type} ~~2^{n-1},$$
\begin{equation}
\lim_{n \to \infty} \max_{0 \leq k \leq 2^n-1} |J_k^n| = 0~~ \text{and} ~~Q = \bigcap_{n=0}^\infty \bigcup_{k=0}^{2^n-1} J_k^n~~ \text{is a minimal Cantor set.} 
\label{form4}
\end{equation}
\smallskip

Fibre maps $g_x(y) \equiv 0$ for $x=0,\frac12,1$ and in all periodic points of $f$. On set $Q$, $g_x$ are tent maps divided by different powers of 2. In particular if $\tau (y) =1-|1-2y|$ is a tent map, we set 

\begin{equation}
g_x(y) = \frac{\tau(y)}{2^{i-1}} ~~~\text{for}~~~ x \in Q \cap J^n_{2^{n-1}},
\label{map g}
\end{equation}
 where $n \in [n_i, n_{i+1}-1]$ and sequence $\{n_i\}_{i=1}^\infty$ is to be defined in the next paragraph. The remaining fibre maps are filled in also by tent maps divided by suitable real numbers chosen in such a way that heights of $g_x$ increase or decrease linearly with respect to $x$, which secure the continuity of  $F$.\\

Sequence $\{n_i\}_{i=1}^{\infty}$ must satisfy the conditions $n_1= 1$ and
\begin{equation}
\sum_{i=1}^{\infty} \frac{1}{2^{n_i}}<1, 
\label{condition}
\end{equation}
so it grows faster than sequence of integers. With various choice of sequences $\{n_i\}$ we get whole class of triangular maps, which is denoted by $\mathcal{F}$.
With given conditions for $\{n_i\}$ satisfied, every map $F \in \mathcal{F}$ has positive topological entropy (see \cite{10}, \cite{7}) and there is no DC1--scrambled pair (\cite{7}). On the other hand there exists an uncountable DC2--scrambled set in each fibre above point of $Q$. Originally, in \cite{7} is shown the existence of one DC2--scrambled pair, but by Downarowicz's result (\cite{9}), there can be found a whole uncountable DC2--scrambled set, which can be chosen as a Cantor set.
\label{example}
\end{example}

\bigskip

For any $x\in I$ and any $F\in \mathcal F$,  let $R(F,x,j)$ be the range of  $F^j$ restricted to the fibre $I_x$. Also let denote by $K_0 = \{x \in I; g_x=0\}$ and by $K$ the union of all both-sided $f-$orbits of points from $K_0$. Set $K$ is a countable subset of $I$.  

\medskip

\begin{lemma}
Given $\{n_k\}$ and $F \in \mathcal{F}$ as in the previous example, for any $J_{k_0}^n$ and any point $x \in ((Q \cap J_{k_0}^n )\setminus K)$:
\begin{enumerate}
\item $\limsup_{j\to\infty} R(F,x,j)=1,$
\item $\liminf_{j\to\infty}R(F,x,j)=0.$
\end{enumerate}
\label{helpful}
\end{lemma}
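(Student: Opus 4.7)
My plan is to fix $x \in (Q \cap J^n_{k_0}) \setminus K$ and, exploiting that no $f^j(x)$ lies in $K_0$, write $g_{f^j(x)} = \tau/2^{m_j}$ with an integer $m_j \geq 0$. I would first check by induction, using $\tau([0,a]) = [0, \min(2a,1)]$, that the image is a $y$-interval:
\[
F^j(I_x) = \{f^j(x)\} \times [0, a_j], \qquad a_0 = 1, \qquad a_{j+1} = \frac{\min(2a_j, 1)}{2^{m_j}}.
\]
Thus $R(F,x,j) = a_j$, and the two assertions of the lemma become $\liminf_j a_j = 0$ and $\limsup_j a_j = 1$.

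For part (2), I would fix any integer $i \geq 1$ and argue that, since $Q \cap J^{n_i}_{2^{n_i-1}}$ is a nonempty relatively open subset of the minimal set $Q$, the orbit of $x$ enters it infinitely often; at each such entry $m_j = i-1$, so $a_{j+1} \leq 2^{-(i-1)}$. Letting $i \to \infty$ yields $\liminf a_j = 0$.

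For part (1), the idea is to pass to $b_j := -\log_2 a_j$ and observe that the recursion takes the Lindley-type form
\[
b_{j+1} = (b_j - 1)_{+} + m_j.
\]
The crucial structural point is that each $m_j$ is a non-negative integer and $b_0 = 0$, so $b_j \in \mathbb{Z}_{\geq 0}$, and therefore $b_j = 0$ if and only if $a_j = 1$. Hence it is enough to prove that $b_j = 0$ infinitely often. I would argue by contradiction: if $b_j \geq 1$ for all $j > N$ the recursion degenerates to $b_{N+k} = b_N + \sum_{l<k}(m_{N+l} - 1)$, and it remains to show that the right-hand side tends to $-\infty$. For this I would invoke that $(Q, f|_Q)$ is conjugate to the $2$-adic odometer, hence uniquely ergodic with invariant measure $\mu$ giving mass $2^{-n}$ to each cylinder $Q \cap J^n_\ell$, and evaluate via Abel summation
\[
\bar m = \int_Q m\, d\mu = \sum_{i \geq 1}(i-1)\sum_{n=n_i}^{n_{i+1}-1} 2^{-n} = 2 \sum_{i \geq 2} 2^{-n_i},
\]
which is strictly less than $1$ by (\ref{condition}) and $n_1 = 1$. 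The Birkhoff theorem then gives $k^{-1}\sum_{l<k}(m_{N+l}-1) \to \bar m - 1 < 0$, producing the required contradiction.

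The hardest step will be the pointwise application of the ergodic theorem to the \emph{unbounded} observable $m$ at our specific $x \in Q \setminus K$. I plan to handle this by truncation: $m \wedge M$ is bounded and $\mu$-almost-everywhere continuous, so unique ergodicity gives uniform convergence of its Birkhoff averages to $\int (m \wedge M)\, d\mu$; letting $M \to \infty$ and controlling the tail via the explicit formula for $\mu\{m \geq M\}$ (which is summable by the estimate above) recovers convergence for the full observable $m$.
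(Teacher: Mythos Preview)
Your argument for part~(2) is correct and matches the paper's: both use that the orbit of $x$ meets fibres carrying $\tau/2^{i-1}$ for arbitrarily large $i$, forcing the range below $2^{-(i-1)}$ infinitely often.

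For part~(1) you take a genuinely different route. The paper argues by a direct combinatorial comparison with the borderline (non-admissible) case $n_i=i$: it lists the fibre maps $\tau,\tau/2,\tau,\tau/4,\dots$ along the orbit and observes that before the first application of $\tau/2^{m}$ the interval has been stretched by $2^{m}$, so any short fibre reaches full length; for admissible $\{n_i\}$ the stretching is only faster. Your Lindley recursion $b_{j+1}=(b_j-1)_{+}+m_j$ and the computation $\bar m=2\sum_{i\ge 2}2^{-n_i}$ recast this as subcriticality of a reflected random walk: condition~(\ref{condition}) together with $n_1=1$ is \emph{precisely} the condition $\bar m<1$, and the paper's borderline case is exactly $\bar m=1$. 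Your version makes the role of~(\ref{condition}) fully transparent, at the price of invoking ergodic theory where the paper gets by with elementary counting.

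One caution on the step you flag as hardest. Truncation and summability of $\mu\{m\ge M\}$ give the inequality $\liminf_k \tfrac1k\sum m_l\ge \bar m$ for every $x$, but that is the wrong direction; what your contradiction needs is $\liminf_k \tfrac1k\sum m_l<1$. Here the odometer structure rescues you without Birkhoff for unbounded observables: each $\{m\ge j\}$ is a \emph{single} cylinder of depth $n_{j+1}-1$, so its visit count in $k$ steps is at most $\lfloor k\,2^{-(n_{j+1}-1)}\rfloor+1$, and summing over $j\le M_k:=\max_{l<k}m_l$ yields $\tfrac1k\sum_{l<k}m_l\le \bar m + M_k/k$. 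Since $m_l\le \nu_2(\alpha+l)$ and the maximal $2$-adic valuation in a length-$k$ window is $o(k)$ along the subsequence of $k$'s taken just before the orbit enters the next-deeper cylinder, you obtain $\liminf\le\bar m<1$ at every $x\in Q\setminus K$, closing the argument.
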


\begin{proof}
As was mentioned, the sequence $\{n_i\}_{i=1}^\infty$ is bounded from below by sequence of integers. Let's now suppose the case of triangular map $F (\notin \mathcal{F})$ where $n_i=i$ and point $(x_0,0)$ such that $x_0 \in Q \setminus K$ and $g_{x_0}$ is full tent map (such $x_0$ exists, because $n_1=1$). By (\ref{map g}),
\begin{equation}
g_{f^{i-1+n 2^i}(x_0)} = \frac{\tau}{2^{i-1}}, i=1,2,...;~ n=0,1,2,...~ .
\end{equation}\\
It means, starting at $x_0$, we apply fibre maps in order $\tau, \frac{\tau}{2}, \tau,  \frac{\tau}{4}, \tau,  \frac{\tau}{2}, \tau,  \frac{\tau}{8}, \tau,  \frac{\tau}{2}, \tau,  \frac{\tau}{4}, \tau,  \frac{\tau}{2}, \tau, \frac{\tau}{16}, \tau, ...$. Taking small set $\{x_0\} \times [0,\frac{1}{2^j}] $ for some $j$, every full tent map spreads its length 2 times, while every application of $\frac{\tau}{2^i}$ shrinks the vertical length $2^{i-1}$ times. It is easy to count that before we get to application of map $\frac{\tau}{2^m}$ for chosen $m$, our set spreads its vertical length $2^m$--times (or it reaches the full length first).\\

Now we take sequence $\{n_i\}$ satisfying (\ref{condition}), as desired. In that case the vertical length of set $\{x_0\} \times [0,\frac{1}{2^j}]$ will grow even faster. The same estimate can be done for any point $x_0 \in Q \setminus K$ (not necessarily starting from fibre with full tent map), which proves (1).\\

To prove (2) it is enough to realize that the vertical length of every $\{x_0\} \times [0,\frac{1}{2^j}]$ can maximally spread to full length 1 and periodically, maps $\frac{\tau}{2^k}$ for all $k=1,2,...$ are applied. Every such map shrinks the length of the set to $\frac{1}{2^{k-1}}$, which for $k \to \infty$ gives (2).
\end{proof}

\bigskip

\begin{theorem}
There is a triangular map $F: I^2 \to I^2$ with positive topological entropy, hence is DC2, but containing no DC2-point. 
\label{noDCpoint}
\end{theorem}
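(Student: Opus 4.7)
The plan is to suppose, toward a contradiction, that $(a_0,b_0)\in I^2$ is a DC2--point of some $F\in\mathcal F$, to take for each $\varepsilon>0$ the data $S_\varepsilon, A_\varepsilon, n(\varepsilon)$ of Definition~\ref{DCpoint}, so that
\[
A_\varepsilon\;\subset\;F^{n(\varepsilon)}(A_\varepsilon)\;\subset\;B((a_0,b_0),\varepsilon),
\]
and to extract structural information about $a_0$ and $n(\varepsilon)$ from the outer inclusion that will collide with Lemma~\ref{helpful}(2) applied to the inner one.

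First I would show $a_0\in Q\setminus K$. If $a_0\notin Q$ then the $f$--orbit of $a_0$ is eventually periodic of type $2^j$ and lands on a periodic point belonging to $K_0$, where $g\equiv 0$ collapses every fibre orbit onto $\{y=0\}$ after finitely many iterations, so no DC2--scrambled pair can persist in a neighbourhood of $(a_0,b_0)$; the same collapse argument rules out $a_0\in K$. Since $Q$ is closed and $K$ is countable, this forces $a_0\in Q\setminus K$. Next I would pin down $n(\varepsilon)$ using the type--$2^\infty$ structure of the base. By (\ref{form4}), for each small $\varepsilon$ there is arbitrarily large $n=n^\circ(\varepsilon)$ with $B(a_0,\varepsilon)\cap I\subset J^n_{k_n(a_0)}$; projecting the envelope inclusion to the base coordinate yields $\pi(A_\varepsilon)\subset f^{n(\varepsilon)}(\pi(A_\varepsilon))\subset J^n_{k_n(a_0)}$, and the disjointness (\ref{form1}) together with the cyclic permutation (\ref{form2}) of the basic intervals $\{J^n_k\}$ under $f$ with period $2^n$ force $n(\varepsilon)$ to be a positive multiple of $2^{n^\circ(\varepsilon)}$. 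Thus $n(\varepsilon)\to\infty$ and covers arbitrarily many full base cycles of length $2^{n^\circ(\varepsilon)}$ as $\varepsilon\to 0$.

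Third and crucially, I would drive the vertical extent of $F^{n(\varepsilon)}(A_\varepsilon)$ below any prescribed $\delta>0$. Along each base cycle of length $2^n$ starting from $a_0$, the orbit visits each deep block $J^{n'}_{2^{n'-1}}$ with $n'\le n$ exactly $2^{n-n'}$ times, every visit applying the fibre map $\tau/2^{i(n')-1}$ of Lipschitz constant $2^{2-i(n')}$. The super--geometric growth of $\{n_i\}$ ensured by~(\ref{condition}) makes the accumulated contractions dominate the factor--$2$ expansions coming from the full--tent iterates, so one can force $R(F,x,n(\varepsilon))<\delta$ uniformly for $x\in Q\cap B(a_0,\varepsilon)$. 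Consequently $F^{n(\varepsilon)}(A_\varepsilon)$ has vertical extent at most $\delta$ at every fibre, and by the self--covering so does $A_\varepsilon$ itself. On the other hand, the uncountable DC2--scrambled set produced in~\cite{7} lives inside a single fibre over $Q$ and inherits the unrestricted scrambled behaviour of a full tent map, which provides some fixed constant $\eta>0$ (depending only on the tent, not on $\varepsilon$) below which no $\delta$--thin vertical strip can host that scrambled structure. Taking $\delta<\eta$ contradicts the existence of $S_\varepsilon\subset A_\varepsilon$.

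The main obstacle will be Step three: Lemma~\ref{helpful}(2) gives $\liminf_{j\to\infty}R(F,a_0,j)=0$ along \emph{some} subsequence of iterates, but Step two constrains $n(\varepsilon)$ to be a multiple of $2^{n^\circ(\varepsilon)}$, so one must verify that the shrinkage accumulated over an entire base cycle -- rather than at a single visit to one deep block -- genuinely dominates the expansion from the full tents. Making this combinatorial accounting precise, with~(\ref{condition}) playing the essential role, is the heart of the argument.
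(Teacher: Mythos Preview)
Your overall strategy---locate $a_0$ in $Q\setminus K$, use the cyclic block structure of the base to constrain $n(\varepsilon)$, then derive a contradiction from the fibre dynamics---parallels the paper's, but Step~3 rests on a false premise and this is where the proposal breaks down. You assert that condition~(\ref{condition}) makes the accumulated contractions along a full base cycle dominate the tent expansions, so that $R(F,x,n(\varepsilon))$ can be forced uniformly below any $\delta$. The opposite is true: fast growth of $\{n_i\}$ means that for most levels $n$ the index $i$ with $n\in[n_i,n_{i+1}-1]$ is small, so the fibre map $\tau/2^{i-1}$ is close to the full tent and \emph{expanding}. Condition~(\ref{condition}) is precisely what makes expansion win on average (this is the source of $h(F)>0$), and Lemma~\ref{helpful}(1) records $\limsup_{j}R(F,x,j)=1$ for every $x\in Q\setminus K$. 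The $\liminf=0$ of Lemma~\ref{helpful}(2) comes from isolated visits to single deep blocks, not from net contraction over a cycle. Since a hypothetical DC2--point gets to \emph{choose} $n(\varepsilon)$, you would need the range small for \emph{every} admissible $n(\varepsilon)$, and that fails. Your own last paragraph flags exactly this obstacle; it is not a technicality but the point where the argument collapses.

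The paper attacks the inclusion $A_\varepsilon\subset F^{n(\varepsilon)}(A_\varepsilon)$ differently. It does not claim the whole image is vertically thin; in fact it shows the outer inclusion $F^{n(\varepsilon)}(A_\varepsilon)\subset B(x_{DC},\varepsilon)$ \emph{can} be arranged. The obstruction is local: there is one level-$m$ block $J^m_p$ inside which all the arbitrarily strong contractions $\tau/2^{n_j}$, $j>P$, act on progressively finer sub-blocks. Given \emph{any} candidate $n(\varepsilon)$, one selects a sub-block of $A_\varepsilon$ whose base orbit, during those $n(\varepsilon)$ steps, passes through a sub-block of $J^m_p$ with $n_j$ large enough that this single contraction overwhelms the total expansion accumulated over $n(\varepsilon)$ iterates; the fibre of $F^{n(\varepsilon)}(A_\varepsilon)$ over that sub-block is then strictly shorter than the corresponding fibre of $A_\varepsilon$, so the first inclusion fails. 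You also skip the reduction $b_0=0$ (needed because the deep contractions drag any envelope toward $y=0$), and your final appeal to a uniform lower bound $\eta>0$ on the vertical extent of a DC2--scrambled set is not justified---but the decisive gap is the reversed direction of the expansion/contraction balance in Step~3.
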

\bigskip

\begin{proof}
Suppose the class $\mathcal{F}$ from Example \ref{example}. We will try to find sequence $\{n_i\}$ with corresponding map $F$ and a DC2--point $x_{DC} = (x_0,y_0)$ for $F$. By the definition from \cite{1}, $x_{DC}$ is DC2--point, if for every $\varepsilon>0$ there are distributionally scrambled set $S_{x_{DC},\varepsilon}$ (of type DC2), integer $n(x_{DC}, \varepsilon)$ and envelope $A_{x_{DC} , \varepsilon} \supset S_{x_{DC},\varepsilon}$ such that 
\begin{equation}
A_{x_{DC}, \varepsilon} \subseteq F^{n(x_{DC},\varepsilon)}(A_{x_{DC}, \varepsilon}) \subset B(x_{DC},\varepsilon).
\label{inclusions}
\end{equation}
\medskip

Let us gather some facts about our example:
\begin{enumerate}
\item Coordinate $x_0 \in Q$, because $Q \times I$ is the support of topological entropy of $F$ (see \cite{10}, or \cite{7}).
\item If point $x \in Q$ lies in the both-sided $f-$orbit of a point where the fibre map equals zero, then $R(F,x,j)=0$ eventually. Therefore $x_0 \in Q \setminus K$.  
\item Possible DC2--scrambled sets are always contained in the single fibers $I_x$, because different points of $Q$ are contained in different $J_k^n$'s, which keep their distance from each other when iterated. 
\item On the other hand, the envelopes cannot be subsets of the single fibers, because every fiber gets arbitrarily close to itself by $f$-iterating, but never exactly on itself. 
\item If $y_0\neq 0$, $x_{DC}$ is not a DC2--point. Regular applications of maps $\frac{\tau}{2^i}$ for high $i$ shrink possible envelopes arbitrarily close to zero, which makes them not invariant. Therefore $y_0=0$.
\label{3}
\end{enumerate}
\medskip

Consequently $x_{DC}=(x_0,0)$, $x_0 \in Q \setminus K$, $S_{x_{DC}, \varepsilon} \subseteq I_{x_0}$ and $A_{x_{DC},\varepsilon} \subseteq (J_{k_0}^m \cap Q) \times [0 ,\varepsilon)$ for some $k_0$ and $m$. For every $\varepsilon>0$ there is unique $m \in \mathbb{N}$ such that $\frac{1}{2^m} \leq \varepsilon < \frac{1}{2^{m-1}}$ and unique $J_{k_0}^m$ which contains $x_0$. Clearly $A_{x_{DC},\varepsilon}$ can be chosen closed and such that $S_{x_{DC},\varepsilon} \subset A_{x_{DC},\varepsilon}$. Envelope $A_{x_{DC},\varepsilon}$ will be "jagged" from above. Indeed, fibers in $(J_{k_0}^m \cap Q \cap K) \times [0 ,\varepsilon)$ can only consists of points $(J_{k_0}^m \cap Q \cap K) \times \{0\}$, otherwise the envelope would not be invariant. \\

As $J_{k_0}^m$ "travels" through different $J_k^m$'s, as described in (\ref{map g}), different fiber maps $\frac{\tau}{2^{n_i}}$ are applied. Since there are $2^m$ portions $J_k^m$ and infinitely many maps $\frac{\tau}{2^{n_i}}$ acting on them (sequence $\{n_i\}$ is increasing), there is one set $J_{p}^m$ and number $P \in \mathbb{N}$, such that all maps $\frac{\tau}{2^{n_j}}$, $j>P$ are acting in points of $J_p^m$. Obviously the higher $j$, the smaller subportion of $J_p^m$ where map $\frac{\tau}{2^{n_j}}$ is acting on.\\

The portion $J_{k_0}^m$ is $f$-periodic with period $2^{m}$, and by (\ref{form4}), $J_{k_0}^m \cap Q$ is also $f$-periodic with period $2^{m}$. Also any subportion $J_{k_1}^{m+i} \subset J_{k_0}^m$ for arbitrary $i \in \mathbb{N}$ is periodic with period $2^{m+i}$. \\

To make the $y$-axis of $A_{x_{DC}, \varepsilon} \subset (J_{k_0}^m \cap Q) \times [0 ,\frac{1}{2^m})$ also shrink back to $B(x_{DC}, \varepsilon)$ with period of suitable power of $2$ it is enough to choose $x_0$ from such subportion $J_{k_1}^{m+i} \subset f(J_p^m)$ that in points of $f^{-1}(J_{k_1}^{m+i})$ only maps $\frac{\tau}{2^{m+1}}$ or lower are applied (see (2) from Lemma \ref{helpful}). In such case the vertical length of fibres of $A_{x_{DC}, \varepsilon}$ will sufficiently shrink at iteration $2^m$ and the chosen subportion $J_{k_1}^{m+i}$ is periodic with period $2^{m+i}$. Consequently $F^{2^{m+i}} (A_{x_{DC}, \varepsilon}) \subset B(x_{DC}, \varepsilon)$. \\

On the other hand it is impossible to satisfy the first inclusion in (\ref{inclusions}). Exactly once in every $2^m$ iterations, $J_{k_0}^m$ maps to $J_p^m$, where all fiber maps $\frac{\tau}{2^{n_j}}, j>P$ are applied. Therefore for any $n \in \mathbb{N}$ always there is a part of $A_{x_{DC},\varepsilon}$ such that the fibers in $F^n(A_{x_{DC},\varepsilon})$ are shorter than the original ones in $A_{x_{DC},\varepsilon}$.
\end{proof}

\bigskip

 {\bf Acknowledgment.}
The second author thanks Professor Jaroslav Sm\'{\i}tal for his valuable suggestions. 

\bigskip
\bigskip

\thebibliography{99}

\bibitem{11} R.L. Adler, A.G. Konheim, M.H. McAndrew; Topological entropy, Trans. AMS 114 (1965), 309 --319. 

\bibitem{2} L. Alsedà, J. Libre, M. Misiurewicz; Combinatorial Dynamics and Entropy in Dimension One (2nd ed.), World Scientific, 2000. ISBN 978-981-02-4053-0.

\bibitem{6} F. Balibrea, J. Smítal, M. Štefánková; The three versions of distributional chaos, Chaos, Solitons, Fractals 23 (2005), issue 5, 1581 --1583.

\bibitem{17} N.C. Bernardes, A. Bonilla, A. Peris and X. Wu; Distributional chaos for operators on Banach spaces, Journal of Masthematical Analysis and Applications, 459(2), 797-821.

\bibitem{4} L.S. Block, W.A. Coppel; Dynamics in One Dimension, Springer-Verlag, Berlin Heidelberg, 1992.

\bibitem{3} T. Downarowicz; Entropy in Dynamical Systems, volume 18 of New Mathematical Monographs. Cambridge University Press, Cambridge, 2011.

\bibitem{9} T. Downarowicz; Positive topological entropy implies chaos DC2, Proceedings AMS vol. 142, No 1 (2014), 137 -- 149. 

\bibitem{12} R.A. Holmgren; A first course in discrete dynamical systems, 2nd edition, Springer Universitext (1996). 

\bibitem{10} S.F. Kolyada; On dynamics of triangular maps of the square, Ergodic Theory Dynamical
Systems 12 (1992), 749--768.

\bibitem{8} V. Kurkova; Sharkovsky's program for classification of triangular maps is almost completed, Nonlinear Analysis 73 (2010), 1663 -- 1669. 

\bibitem{16} G. Liao, Q. Fan;.Minimal subshifts which display Schweizer-Smítal chaos and have zero topological entropy. Science in China 41 (1998), 33-38.

\bibitem{1} A. Loranty, R. Pawlak; On the local aspects of distributional chaos; Chaos 29, 013104, 2019. 

\bibitem{13} M. Misiurewicz; Structure of mappings of an interval with zero entropy, Publications Mathématiques de l'IHÉS, Tome 53 (1981), 5-16.

\bibitem{15} R. Pikula; On some notions of chaos in dimension zero, Colloquium Mathematicum 107 (2007):167--177. 

\bibitem{5} S. Ruette; Chaos on the interval, Volume 67 of University Lecture Series, AMS, 2017. 

\bibitem{14} B. Schweizer and J. Smítal; Measure of chaos and a spectral decomposition of dynamical systems on the interval, Trans  AMS.344(1994), 737–754. 

\bibitem{7} Sm\'{\i}tal J, \v Stef\'ankov\'a M. Distributional chaos for triangular maps, Chaos, Solitons and Fractals (2004);  21: 1125--1128.

\bigskip

\end{document}